\documentclass{amsart}

\usepackage[latin1]{inputenc}
\usepackage{latexsym}
\usepackage{amsfonts}
\usepackage{amssymb}
\usepackage{color}
\usepackage{varioref}
\usepackage{epsfig}
\usepackage{xy}

\newtheorem{thm}{Theorem}[section]

\newtheorem{prop}[thm]{Proposition}
\newtheorem{lemma}[thm]{Lemma}
\newtheorem{cor}[thm]{Corollary}

          {\theoremstyle{definition}
}
          {\theoremstyle{definition}
}

\DeclareMathOperator{\Hess}{Hess}

\newcommand{\RR}{{\mathbb R}}

\newcommand{\PP}{{\mathcal P}}

%

\begin{document}

\title{On the number of connected components of the parabolic curve}
\date{\today}
\author{Benoît Bertrand}
\address{Institut mathématique de Toulouse, I.U.T. de Tarbes, 1 rue Lautréamont, BP 1624, 65016 Tarbes}
\email{benoit.bertrand@iut-tarbes.fr}
\author{Erwan Brugall\'e}
\address{Univ.\ Paris 6, 175 rue du Chevaleret, 75 013 Paris, France}
\email{brugalle@math.jussieu.fr}

\subjclass[2000]{}
\keywords{}

\begin{abstract}
 We construct a polynomial of degree $d$ in two variables whose
 Hessian curve has $(d-2)^2$ connected components using Viro
 patchworking. In particular, this implies the existence of a smooth
 real algebraic surface of degree $d$ in $\RR P^3$ whose parabolic
 curve is smooth and has $d(d-2)^2$ connected components.  
\end{abstract}

\maketitle

\hspace{50ex}
\textit{``Salut à toi Che Guevara''}

\hspace{63ex}
\textit{Bérurier Noir}
\vspace{3ex}

\section{Introduction}
The Hessian of a polynomial $P(X_0,\ldots,X_n)$ is the determinant of
the matrix $(\frac{\partial^2 P}{\partial X_i\partial X_j})_{0\le
  i,j\le n}$. If $P$ is of degree $d$, then this determinant, denoted
by $\Hess(P)$, is generically a polynomial of degree $(n+1)(d-2)$. In
this note, we are interested in the real solutions of the system $\PP
(P) = \{P=0\}\cap\{ \Hess(P)=0\}$ when $P$ is a generic polynomial
with real coefficients.

In the case when $n=2$ and $P(X,Y,Z)$ is homogeneous, it is well known
that the set $\PP (P)$ is the set of real flexes of the curve with
equation $P(X,Y,Z)=0$. In \cite{Kle2} (see also \cite{Ron1},
\cite{Schuh}, and \cite{V10}), Klein proved that the number of real
flexes of a smooth real algebraic curve of degree $d$ cannot exceed
$d(d-2)$, and that this bound is sharp.

In the case when $n=3$ and $P(X,Y,Z,T)$ is homogeneous, not much is
known about the curve $\PP (P)$, called the \textit{parabolic curve}
of the surface with equation $P=0$.

  If $P$ is of degree $d$, then according to Harnack
inequality the curve $\PP (P)$ cannot have more than
$2d(d-2)(5d-12)+2$ connected components. 
Arnold's problem \cite[2001-2]{Arn2} on the topology of the parabolic
curve asks in particular for the maximal number of components of $\PP
(P)$ or at least for it asymptotic (See also problem 2000-2).
Ortiz-Rodriguez constructed in \cite{Rod1} smooth real algebraic
surfaces of any degree $d\ge 3$ whose parabolic curve is smooth and
has $\frac{d(d-1)(d-2)}{2}$ connected components.

Her construction uses auxiliary parabolic curves of graphs of
polynomials (i.e. $P(X,Y,Z,1)=Z-Q(X,Y)$). In this case, the curve $\PP
(P)$ is the locus of the graph where the Gaussian curvature vanishes
and its projection to the plane $(X,Y)$ has equation $\Hess(Q)=0$
(note that $Q$ and $\Hess(Q)$ are not necessarily homogeneous). If $Q$
is of degree $d$, then $\Hess(Q)$ is (generically) of degree $2(d-2)$
and defines a curve with at most $(2d-5)(d-3)+1$ compact connected
components in $\RR^2$. The maximal number of compact connected
components of such a curve in this special case is the subject of
problem \cite[2001-1]{Arn2} (See also problem 2000-1).  In
\cite{Rod1}, Ortiz-Rodriguez constructed real polynomials $Q(X,Y)$ of
degree $d\ge 3$ whose Hessian define smooth real curves with
$\frac{(d-1)(d-2)}{2}$ compact connected components in $\RR^2$. In
small degrees, this construction has been slightly improved in
\cite{Sot1}.

Note that if $Q(X,Y)$ is a polynomial in two variables, then 
$$\Hess(Q)=\frac{\partial^2 Q}{\partial X^2}\frac{\partial^2
  Q}{\partial Y^2} - \left(\frac{\partial^2 Q}{\partial X\partial
  Y}\right)^2$$

In this note, we prove the following result
which improves the previously known asymptotic by a factor 2.

\begin{thm}\label{theorem}
For any $d\ge 4$, there exists a real polynomial $Q_d(X,Y)$ of degree
$d$ such that the curve with equation $\Hess(Q_d)=0$ is smooth and has $(d-2)^2$
compact connected components in $\RR^2$.
\end{thm}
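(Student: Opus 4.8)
\medskip
\noindent\textbf{Proof strategy.}\enspace
The plan is to build $Q_d$ as a member of a Viro family and read the topology of $\{\Hess(Q_d)=0\}$ off the associated combinatorial data. Concretely, fix a convex piecewise-linear function $\nu\colon\Delta_d\to\RR$, where $\Delta_d$ is the triangle with vertices $(0,0),(d,0),(0,d)$, whose domains of linearity form a triangulation, fix a sign array $\varepsilon_{ij}\in\{\pm1\}$, and set
\[
Q_d(X,Y)=\sum_{(i,j)\in\Delta_d\cap\ZZ^2}\varepsilon_{ij}\,t^{\nu(i,j)}\,X^iY^j
\]
for $t>0$ small; since all monomials lie in $\Delta_d$ this polynomial has degree $d$. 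The natural starting point is the identity $\Hess\bigl(f(X)+h(Y)\bigr)=f''(X)\,h''(Y)$: if $f,h$ have degree $d$ and $f'',h''$ have $d-2$ simple real roots each, the Hessian of the ``uncoupled'' polynomial $f(X)+h(Y)$ vanishes on a grid of $d-2$ vertical and $d-2$ horizontal lines, meeting in exactly $(d-2)^2$ nodes. The aim is to choose $\nu$ and $\varepsilon$ so that switching on the remaining (mixed) monomials of $Q_d$ perturbs this degenerate Hessian, through patchworking, into a smooth curve in which each of the $(d-2)^2$ nodes has become a compact oval and nothing else has appeared.

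The core of the argument is to control the Hessian under patchworking. First one checks that $\Hess(Q_d)$ has degree exactly $2(d-2)$, i.e.\ that its Newton polygon is $\Delta_{2(d-2)}$: the coefficients at the three vertices of $\Delta_{2(d-2)}$ are nonzero for a generic choice. Next, the coefficient of $X^aY^b$ in $\Hess(Q_d)=Q_{XX}Q_{YY}-Q_{XY}^2$ is the explicit sum
\[
\sum_{\substack{i+k=a+2\\ j+l=b+2}}\bigl(i(i-1)\,l(l-1)-ij\,kl\bigr)\,\varepsilon_{ij}\varepsilon_{kl}\,t^{\nu(i,j)+\nu(k,l)},
\]
and, taking for each $(a,b)$ the lowest power of $t$ occurring with nonzero coefficient, one obtains a convex piecewise-linear function $\mu$ on $\Delta_{2(d-2)}$ together with an induced sign distribution on the vertices of the subdivision it defines. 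Because $\Hess$ is a nonlinear operation, $\mu$ and these signs are \emph{not} merely transported from $(\nu,\varepsilon)$, and one must choose $(\nu,\varepsilon)$ so that: (a) no unwanted cancellation occurs among the dominant ($t$-lowest) terms; (b) the induced subdivision of $\Delta_{2(d-2)}$ is unimodular, so that Viro's construction produces a smooth curve whose local pieces are curves of trinomials; and (c) the signs make the combinatorial gluing close exactly $(d-2)^2$ of these local pieces into ovals, one near each node of the grid $f''(X)h''(Y)=0$, the remaining pieces assembling into the finitely many non-compact components forced by the shape of $\Delta_{2(d-2)}$.

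Granting this, Viro's patchworking theorem gives that for $t>0$ small enough $\{\Hess(Q_d)=0\}$ is smooth and, counting components of the combinatorial gluing graph, has exactly $(d-2)^2$ compact connected components in $\RR^2$. I expect steps (a)--(c) to be the main obstacle: taming the nonlinearity of $\Hess$, ruling out the parasitic cancellations, and matching the count precisely. The case $d=4$ should serve as a guide and be doable by hand: there $(d-2)^2=4$ is already the maximal number of compact components of a curve of degree $4=2(d-2)$ in $\RR^2$ (Harnack's bound, which for the Hessian-of-a-graph specializes to $(2d-5)(d-3)+1$), so one is seeking a quartic polynomial whose Hessian is a smooth $M$-quartic; such a polynomial can be written down explicitly and already carries the whole mechanism.
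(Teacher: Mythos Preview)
Your proposal is a strategy outline rather than a proof: you yourself flag steps (a)--(c) as ``the main obstacle'' and then leave them unresolved. The nonlinear interaction between $\Hess$ and the patchworking data is exactly the difficulty, and without an explicit choice of $(\nu,\varepsilon)$ together with a verification that the dominant terms of $\Hess(Q_d)$ suffer no cancellation, that the induced subdivision of $\Delta_{2(d-2)}$ is unimodular, and that the signs close up $(d-2)^2$ ovals, nothing has been established. The heuristic of perturbing the nodal grid $f''(X)h''(Y)=0$ is appealing, but it does not by itself show that a perturbation with the required topology lies in the image of $Q\mapsto\Hess(Q)$; even the advertised test case $d=4$ is not carried out.

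The paper's route is different and sidesteps these obstacles entirely. Rather than analysing an induced subdivision on $\Delta_{2(d-2)}$, the authors prove (Proposition~\ref{patch cp}) that for any cell $\Delta'$ of $\tau_\nu$ contained in the region $\{x\ge 2,\ y\ge 2\}$, taking the Hessian commutes with truncation: $\Hess(Q_t)^{\Delta'_H}=\Hess(Q^{\Delta'})$. Thus the patchworked Hessian is simply the patchwork of the local Hessians, and one only has to control those. They then take as local pieces the explicit monomial-times-trinomial polynomials $X^iY^j(X+Y+Y^2)$, $X^iY^j(XY+X+Y^2)$, $X^kY^2(1+X+Y)$; direct computation (Lemma~\ref{pieces}) shows each of their Hessians has exactly one compact oval in $(\RR^*)^2$, while the edge truncations (Lemma~\ref{bound}) have empty real locus there. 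Assembling $(d-2)^2$ such triangles inside $\{x\ge 2,\ y\ge 2\}$ and applying Viro yields the $(d-2)^2$ ovals with no cancellation analysis, no induced function $\mu$ to compute, and no sign bookkeeping. Your global-perturbation viewpoint might be completable, but the paper's local argument is what actually delivers the theorem.
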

Theorem \ref{theorem} is proved in section \ref{proof}. The main tool
is Viro's Patchworking theorem to glue Hessian curves (see section
\ref{glue}).

\begin{cor}
For any $d\ge 4$, there exists a smooth real algebraic surface in $\RR
P^3$ of degree $d$ whose parabolic curve is smooth and has $d(d-2)^2$
connected components. 
\end{cor}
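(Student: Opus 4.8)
\emph{Approach.} The plan is to promote the plane Hessian curve provided by Theorem~\ref{theorem} to a surface in $\RR P^3$ by viewing the graph of $Q_d$ as one of $d$ nearby sheets of a perturbation of $d$ hyperplanes, so that each sheet contributes one copy of $\{\Hess(Q_d)=0\}$ to the parabolic curve, for a total of $d(d-2)^2$ ovals. Concretely, fix $Q_d$ as in Theorem~\ref{theorem}, choose pairwise distinct real numbers $a_1,\dots,a_d$, and for a small real parameter $\varepsilon$ consider the surface $S_\varepsilon\subset\RR P^3$ with homogeneous equation
\[
P_\varepsilon(X,Y,Z,T)\ :=\ \prod_{i=1}^d (Z-a_iT)\ -\ \varepsilon\,\widetilde{Q_d}(X,Y,T)\ =\ 0,
\]
where $\widetilde{Q_d}$ denotes the degree-$d$ homogenisation of $Q_d$. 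This is a surface of degree $d$, and for generic small $\varepsilon$ it is smooth: the base locus of the pencil $\{\lambda\prod_i(Z-a_iT)+\mu\,\widetilde{Q_d}=0\}$ has codimension $2$, so this is a standard Bertini argument, possibly after a further small generic perturbation inside the space of degree-$d$ forms.

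\emph{The $d$ sheets.} In the affine chart $T=1$ the equation reads $\prod_{i=1}^d(Z-a_i)=\varepsilon\,Q_d(X,Y)$. Fix a closed ball $B\subset\RR^2$ containing in its interior all the $(d-2)^2$ ovals of $\{\Hess(Q_d)=0\}$. For $\varepsilon$ small enough, over $B$ the surface $S_\varepsilon$ is the disjoint union of $d$ graphs $Z=\phi_{i,\varepsilon}(X,Y)$, $i=1,\dots,d$, with
\[
\phi_{i,\varepsilon}\ =\ a_i+\frac{\varepsilon}{\prod_{j\neq i}(a_i-a_j)}\,Q_d\ +\ O(\varepsilon^2)
\]
uniformly on $B$. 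The parabolic curve of $S_\varepsilon$ meets the $i$-th sheet in $\{\Hess(\phi_{i,\varepsilon})=0\}$, and since $\Hess(\phi_{i,\varepsilon})=\varepsilon^2\bigl(\prod_{j\neq i}(a_i-a_j)\bigr)^{-2}\Hess(Q_d)+O(\varepsilon^3)$ while $\{\Hess(Q_d)=0\}$ is a smooth compact curve transverse to $0$, for $\varepsilon$ small this zero set is a $C^\infty$-small deformation of $\{\Hess(Q_d)=0\}$, hence has exactly $(d-2)^2$ ovals contained in $B$. As the $d$ sheets are pairwise disjoint over $B$ (they are separated in the $Z$-direction), we obtain $d(d-2)^2$ pairwise disjoint ovals in the parabolic curve of $S_\varepsilon$; and for generic $\varepsilon$ (again after a further generic perturbation if needed) the parabolic curve $\PP(P_\varepsilon)$ is smooth.

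\emph{Main obstacle.} The hard part is that the argument above only locates $d(d-2)^2$ components and says nothing about $\PP(P_\varepsilon)$ outside $B$: near the line $\{Z=T=0\}$, along which the $d$ hyperplanes $\{Z=a_iT\}$ all meet, $S_\varepsilon$ is a perturbation of a degenerate configuration and may a priori carry further parabolic components. To obtain \emph{exactly} $d(d-2)^2$ components one must control $\PP(P_\varepsilon)$ in the complement of $B$; this is precisely the point handled by Ortiz-Rodriguez in \cite{Rod1}, where the same device produces $d\cdot\tfrac{(d-1)(d-2)}{2}$ components from an affine Hessian curve with $\tfrac{(d-1)(d-2)}{2}$ ovals, and her analysis applies here with $\tfrac{(d-1)(d-2)}{2}$ replaced by $(d-2)^2$. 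If one is content with improving the known asymptotic, it already suffices to keep the weaker conclusion that $\PP(P_\varepsilon)$ has at least $d(d-2)^2$ components, which follows directly from the sheet analysis above.
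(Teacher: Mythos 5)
Your proposal is correct and follows essentially the same route as the paper: both take the surface $R(Z)-\varepsilon Q_d(X,Y)=0$ with $R$ a degree-$d$ polynomial with $d$ distinct real roots and invoke Ortiz-Rodriguez's Theorem~5 of \cite{Rod1}, which states exactly that this surface's parabolic curve acquires $dk$ components from the $k$ ovals of $\Hess(Q_d)=0$. You merely unfold the sheet-by-sheet argument behind that citation and correctly flag that the ``exactly $d(d-2)^2$'' claim (control away from the ovals) is the part supplied by \cite{Rod1}.
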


\begin{proof}
As observed in \cite[Theorem 5]{Rod1}, if the real curve $\Hess(Q)(X,Y)=0$ has
$k$ compact connected 
components in $\RR^2$, then for $\varepsilon$ small enough, the
parabolic curve of 
the surface with equation $R(Z)-\varepsilon Q(X,Y)=0 $, where $R(Z)$
is a real polynomial of degree $d$ with $d$ distinct real roots, has
$dk$ connected components.
\end{proof}

\section{Gluing of Hessians}\label{glue}

Let $Q_t(X,Y)=\sum a_{i,j}(t)X^iY^j$ be a polynomial   whose
coefficients are real polynomials in  
one variable $t$. Such a polynomial has two natural Newton polytopes
depending on whether $Q_t(X,Y)$ is considered as a polynomial in the
variables $X$ and $Y$ or as a polynomial in
 $X$, $Y$ and $t$. We denote by $\Delta_2(Q_t)$
the former Newton polytope, and by $\Delta_3(Q_t)$ the latter.
There
exists a convex piecewise linear function $\nu:\Delta_2(Q_t)\to\RR$
whose graph is the union of the bottom faces of $\Delta_3(Q_t)$. The
linearity domains of the function $\nu$ induce a subdivision
$\tau_\nu$ of $\Delta_2(Q_t)$. Note that since real numbers are constant
real polynomials, this construction makes sense also for
polynomials in $\RR [X,Y]$, in this case the function $\nu$ is constant
and the subdivision $\tau$ is the trivial one.

Let $\Delta'$ be a cell of $\tau_\nu$. The restriction of $\nu$ to
 $\Delta'$ is given by a linear function $L:(i,j)\mapsto \alpha i
 +\beta j +\gamma$ which does not coincide with $\nu$ on any polygon
 of $\tau_\nu$ strictly containing $\Delta'$.  If $\Delta'$ is of
 dimension $k \le 2$ there is a $2-k$ dimensional family of such
 functions but the following construction does not depend on the
 choice of the function as long as $\Delta_3(Q_t) \setminus \nu
 (\Delta')$ is strictly above the graph of $L$.  We define the
 $\Delta'$-truncation of $Q_t(X,Y)$ as the polynomial
 $Q^{\Delta'}(X,Y)$ in $\RR[X,Y]$ given by substituting $t=0$ in the
 polynomial $t^{-\gamma}Q_t(t^{-\alpha}X,t^{-\beta}Y)$.

Viro's Patchworking Theorem asserts that if all the polynomials
$Q^{\Delta'}(X,Y)$ are non singular in $(\RR^*)^2$ when $\Delta'$ goes
through all the cells of $\tau_\nu$, then for a small enough real
number $t$, the real algebraic curve with equation $Q_t(X,Y)=0$ is a
gluing of the real algebraic curves with equation $Q^{\Delta'}(X,Y)=0$
when $\Delta'$ goes through all the $2$-dimensional polygons of
$\tau_\nu$.  In particular any compact oval in $(\RR^*)^2$ of a curve
defined by $Q^{\Delta'}(X,Y)$ leads to an oval of the curve defined by
$Q_t(X,Y)$ when $t$ is small enough.  For a more precise statement of
Viro's Patchworking Theorem, we refer to \cite{V1}, \cite{V4}, or
\cite{Ris}. See also \cite{Mik12} for a tropical approach.

The key observation of this paper is that when gluing the polynomials
$Q^{\Delta'}(X,Y)$ one also glues their Hessians.  We formalize this
in the following proposition.

Consider as above a polynomial $Q_t(X,Y)$ whose coefficients are real
polynomials.  Let us denote by $\widetilde \nu$ the convex piecewise
linear function constructed as above out of the Hessian $\Hess(Q_t)$
of $Q_t(X,Y)$ with respect to the variables $X$ and $Y$.  If $\Delta'$
is a polygon of $\tau$, we denote by $\Delta'_H$ the Newton polygon of
the polynomial $\Hess(Q^{\Delta'}) $.

\begin{prop}\label{patch cp}
If $\Delta'$ is a cell of $\tau_\nu$ lying in the region $\{(x,y) | \
x\ge 2\ and \ y\ge 2\}$, then $ \Delta'_H$ is a cell of the
subdivision $\tau_{\widetilde \nu}$ and $\Hess(Q_t)^{
\Delta'_H}=\Hess(Q^{\Delta'}) $.
\end{prop}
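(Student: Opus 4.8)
The plan is to track how the Hessian behaves under the change of variables $t^{-\gamma}Q_t(t^{-\alpha}X,t^{-\beta}Y)$ and the substitution $t=0$ that define the $\Delta'$-truncation, and to show that Hessianizing and truncating commute whenever the truncation is taken in the ``interior'' region $\{x\ge 2,\ y\ge 2\}$. First I would record the behaviour of $\Hess$ under scaling of the variables: for any polynomial $P(X,Y)$ and scalars $\lambda,\mu$, a direct computation with the $2\times2$ Hessian matrix gives $\Hess\bigl(P(\lambda X,\mu Y)\bigr)=\lambda^2\mu^2\,(\Hess P)(\lambda X,\mu Y)$, since each second partial $\partial^2/\partial X^2$, $\partial^2/\partial Y^2$, $\partial^2/\partial X\partial Y$ picks up exactly $\lambda^2$, $\mu^2$, $\lambda\mu$ respectively, and the determinant $Q_{XX}Q_{YY}-Q_{XY}^2$ is homogeneous of bidegree $(2,2)$ in these factors. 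Multiplication of $P$ by a scalar $c$ multiplies $\Hess P$ by $c^2$. Applying this with $\lambda=t^{-\alpha}$, $\mu=t^{-\beta}$, $c=t^{-\gamma}$ yields the crucial identity
\[
\Hess\bigl(t^{-\gamma}Q_t(t^{-\alpha}X,t^{-\beta}Y)\bigr)=t^{-2\gamma-2\alpha-2\beta}\,\Hess(Q_t)(t^{-\alpha}X,t^{-\beta}Y).
\]

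Next I would identify the right-hand side with a truncation of $\Hess(Q_t)$. If $L:(i,j)\mapsto\alpha i+\beta j+\gamma$ is the linear function cutting out $\Delta'$ from $\tau_\nu$, then $2L:(i,j)\mapsto 2\alpha i+2\beta j+2\gamma$ is an affine function; I claim it is (a translate of) the linear function cutting out a cell of $\tau_{\widetilde\nu}$, and that the corresponding $\Delta'_H$-truncation of $\Hess(Q_t)$ is precisely $\Hess(Q^{\Delta'})$. The point is that $\widetilde\nu$, the lower hull defining the patchworking data for $\Hess(Q_t)$, is controlled by $\nu$ through the determinantal formula $\Hess(Q_t)=(Q_t)_{XX}(Q_t)_{YY}-((Q_t)_{XY})^2$: a monomial $t^{\nu(i,j)}X^iY^j$ of $Q_t$ differentiated twice in $X$ contributes (when $i\ge 2$) a monomial whose $t$-exponent is still $\nu(i,j)$ at the shifted lattice point $(i-2,j)$, and similarly for the other derivatives. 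The hypothesis $x\ge 2,\ y\ge 2$ on $\Delta'$ guarantees that the shifts by $(-2,0)$, $(0,-2)$, $(-1,-1)$ all stay within the lattice points where these derivatives are nonzero, so no cancellation of leading ($t$-lowest) terms is forced by the differentiation, and the combinatorics of the lower hull of $\Hess(Q_t)$ over $\Delta'_H$ is governed by the restriction of $\nu$ to $\Delta'$, i.e.\ by $L$. Substituting $t=0$ in $t^{-2L}\Hess(Q_t)(t^{-\alpha}X,t^{-\beta}Y)$ then gives, on one hand by definition the $\Delta'_H$-truncation $\Hess(Q_t)^{\Delta'_H}$, and on the other hand, by the displayed identity above (after absorbing the shift of $L$ into the exponent bookkeeping and using that $Q^{\Delta'}=\lim_{t\to0}t^{-\gamma}Q_t(t^{-\alpha}X,t^{-\beta}Y)$ together with continuity of $\Hess$), exactly $\Hess(Q^{\Delta'})$. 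It also follows that $\Delta'_H$ is the Newton polygon of $\Hess(Q^{\Delta'})$ and is a genuine cell of $\tau_{\widetilde\nu}$.

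The main obstacle, and the place where the hypothesis $\{x\ge 2,\ y\ge 2\}$ is genuinely needed, is the verification that differentiation does not destroy the lower-hull structure near $\Delta'$: a priori the second partials can kill the monomials that realize the minimum of $\nu$ over a vertex or edge of $\Delta'$ (e.g.\ if $\Delta'$ touches the axes, the monomials $X^iY^j$ with $i\le 1$ die under $\partial^2/\partial X^2$), and then the $t$-exponent controlling the truncation of $\Hess(Q_t)$ there would not be $2L$ but something larger, breaking the clean identification. Restricting to cells with $i\ge2$ and $j\ge2$ throughout exactly rules this out, so all three products $Q_{XX}Q_{YY}$, $Q_{XY}^2$ have their $\Delta'$-parts computed from the $\Delta'$-truncation of $Q_t$ itself, and one gets $\Hess(Q_t)^{\Delta'_H}=\Hess(Q^{\Delta'})$ with no correction terms. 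The remaining steps — checking that $\Delta'_H=\Delta'+\{(-2,0),(0,-2),(-1,-1)\}$-type Minkowski combination is correct, that it is a face of $\Delta_2(\Hess(Q_t))$, and that distinct cells $\Delta'$ give the matching cells of $\tau_{\widetilde\nu}$ — are routine polytope bookkeeping once the $t$-exponent claim is in hand.
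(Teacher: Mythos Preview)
Your route is genuinely different from the paper's. The paper argues purely on the level of Newton polytopes: it writes $\Hess(Q_t)=p_1p_2-p_3^2$ with $p_i$ the second partials, observes that the three translated polytopes $tr_{(2,0,0)}\Delta_3(p_1)$, $tr_{(0,2,0)}\Delta_3(p_2)$, $tr_{(1,1,0)}\Delta_3(p_3)$ all sit inside $\Delta_3(Q_t)$ and share every face lying in $\{x\ge2,\ y\ge2\}$, and then invokes the standard fact that faces of a Minkowski sum of two copies of the same polytope are the ``doubled'' faces. Your argument instead pushes the scaling identity $\Hess\bigl(t^{-\gamma}Q_t(t^{-\alpha}X,t^{-\beta}Y)\bigr)=t^{-2\gamma-2\alpha-2\beta}\,\Hess(Q_t)(t^{-\alpha}X,t^{-\beta}Y)$ and reads off the truncation by letting $t\to0$. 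This is slicker once the identity is in hand; the paper's polytope picture, on the other hand, makes it transparent why the hypothesis $\{x\ge2,\ y\ge2\}$ is exactly the condition for the three shifted polytopes to agree along the relevant face.

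Two points need tightening. First, your claim that the supporting affine function for $\Delta'_H$ is $2L:(i,j)\mapsto 2\alpha i+2\beta j+2\gamma$ has the wrong slope: your own displayed identity substitutes $t^{-\alpha}X,\,t^{-\beta}Y$ (not $t^{-2\alpha}X,\,t^{-2\beta}Y$), so the correct function is $\widetilde L(i,j)=\alpha i+\beta j+(2\gamma+2\alpha+2\beta)$, i.e.\ the \emph{same} slope as $L$ with a shifted constant. This is consistent with $\Delta'_H=2\Delta'-(2,2)$ and with $\widetilde\nu(2i-2,2j-2)=2\nu(i,j)$ at vertices. Second, for $\Delta'_H$ to be an honest cell of $\tau_{\widetilde\nu}$ you need that $\Hess(Q^{\Delta'})$ does not collapse; concretely, that at each vertex $(i,j)$ of $\Delta'$ the coefficients of $p_1p_2$ and $p_3^2$ at the corresponding vertex of $2\Delta'-(2,2)$ differ. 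The paper checks this: the difference is $-ij(i+j-1)a_{ij}^2\neq0$ for $i,j\ge1$. Your phrase ``no cancellation of leading terms is forced by the differentiation'' addresses vanishing of the individual $p_k$ but not the possible cancellation in the determinant $p_1p_2-p_3^2$; add this one-line computation and your argument is complete.
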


\begin{proof} 
It is a standard fact (\cite{GKZ} p. 193) that the Newton polytope of
the product of two polynomials corresponds to the Minkowski sum of the
Newton polytopes of the factors. Let $\Gamma_1$ and $\Gamma_2$ be two
identical (up to translation) polytopes, and let $\Gamma_1 \oplus
\Gamma_2$ be their Minkowski sum.  Consider the natural map $\phi:
\Gamma_1 \times \Gamma_2 \rightarrow \Gamma_1 \oplus \Gamma_2$.  The
polytope $\Gamma_1 \oplus \Gamma_2$ is (up to translation) twice the
polytope $\Gamma_i$ and there are natural bijections $\iota_i$ between
the faces (of any dimension) of $\Gamma_1 \oplus \Gamma_2$ and the
faces of $\Gamma_i$. It is not difficult to see that for any face $F$
the preimage $\phi^{-1}(F)$ is exactly $\iota_1(F)\times \iota_2(F)$.

Given a vector $\vec u$ in $\RR^3$, we denote by $tr_{\vec u}$ the
translation along $\vec u$. We denote also respectively by $p_1, p_2 $
and $p_3$ the polynomials $\frac{\partial^2 Q_t}{\partial X^2}$,
$\frac{\partial^2 Q_t}{\partial Y^2}$ and $\frac{\partial^2
Q_t}{\partial X \partial Y}$.  Hence the polynomial $\Hess(Q_t)$ is
the difference of the two products $p_1 p_2$ and $p_3^2$.

A face of $\Delta_3(Q_t)$ lying in the region $\{(x,y,z) | \ x\ge 2\
\mbox{ and } \ y\ge 2\}$ is also a face of
$tr_{(2,0,0)}(\Delta_3(p_1))$, $tr_{(0,2,0)}(\Delta_3(p_2))$ and
$tr_{(1,1,0)}(\Delta_3(p_3))$.  Since these three polytopes are
contained in $\Delta_3(Q_t)$, the result follows immediately from the
above discussion about Minkowski sum of two identical polytopes and
the fact that coefficients of $p_1 p_2$ and $p_3^2$ corresponding to a
vertex $v$ of $\Delta_3(Q_t)$ are different as long as $v$ has a
nonzero first or second coordinate.
\end{proof}

\section{Construction}\label{proof}

Here we apply Proposition \ref{patch cp} to
glue Hessian curves. 
We first construct  pieces we need for patchworking.

\begin{lemma}\label{bound}
For any $i\ge 2$ and $j\ge 2$, the curves with equation $\Hess(X^iY^j(1+Y))=0$,
$\Hess(X^iY^j(X+Y))=0$, and $\Hess(X^iY^j(X+Y^2))=0$  do
not have any real points in $(\RR^*)^2$.
\end{lemma}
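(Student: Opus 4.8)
The plan is to compute the three Hessians explicitly and show each one is, up to a monomial factor and a nonzero constant, a polynomial that visibly takes only one sign on $(\RR^*)^2$. Since $\Hess$ is a quadratic differential expression and each of our polynomials is a monomial times a low-degree factor, the computations are short. For $Q=X^iY^j(1+Y)=X^iY^j+X^iY^{j+1}$, the mixed and pure second partials are again monomials times small polynomials in $Y$ alone, so $\Hess(Q)$ will be $X^{2i-2}Y^{2j-2}$ times a polynomial in $Y$; I expect this polynomial to be a perfect square (or a negative of one) because $\partial^2/\partial X^2$ and $\partial^2/\partial Y^2$ interact in a constrained way when the $X$-dependence is a pure power. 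The same should happen for $Q=X^iY^j(X+Y^2)$, where again one can hope the relevant factor is (minus) a square.

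\smallskip

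First I would treat $Q=X^iY^j(X+Y)$, which is the genuinely two-variable case. Writing $Q = X^{i+1}Y^j + X^iY^{j+1}$, I compute
$\partial^2_X Q = (i+1)iX^{i-1}Y^j + i(i-1)X^{i-2}Y^{j+1}$,
$\partial^2_Y Q = j(j-1)X^{i+1}Y^{j-2} + (j+1)jX^iY^{j-1}$,
$\partial^2_{XY} Q = (i+1)jX^iY^{j-1} + i(j+1)X^{i-1}Y^j$.
Then $\Hess(Q) = \partial^2_XQ\,\partial^2_YQ - (\partial^2_{XY}Q)^2$ is $X^{2i-2}Y^{2j-2}$ times a quadratic form $aX^2 + bXY + cY^2$ in the remaining variables, whose coefficients are explicit products of binomial-type factors in $i,j$. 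The key step is to check that this binary quadratic form is negative definite (equivalently $b^2 - 4ac < 0$ and $a<0$) for all $i,j\ge 2$; then it has no real zeros except where $X$ or $Y$ vanishes, so $\Hess(Q)=0$ has no points in $(\RR^*)^2$. I expect the discriminant inequality to reduce, after cancelling common factors, to something like $(i+j)^2 > 0$ strengthened to a strict inequality that holds precisely when $i,j\ge 2$ (the cases $i=1$ or $j=1$ being exactly where the factor $X^iY^j$ fails to contribute enough). This discriminant computation is the main obstacle: it is elementary but the bookkeeping of the coefficients $a,b,c$ must be done carefully, and one must confirm the inequality is strict on the whole range $i,j\ge 2$ rather than merely nonnegative.

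\smallskip

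For $Q=X^iY^j(1+Y) = X^iY^j + X^iY^{j+1}$, the $X$-dependence is the single power $X^i$, so $\partial^2_XQ = i(i-1)X^{i-2}(Y^j+Y^{j+1})$ and $\partial^2_{XY}Q = iX^{i-1}(jY^{j-1}+(j+1)Y^j)$, while $\partial^2_YQ = X^i(j(j-1)Y^{j-2}+(j+1)jY^{j-1})$. Thus $\Hess(Q) = X^{2i-2}$ times $\bigl[i(i-1)(Y^j+Y^{j+1})\cdot(j(j-1)Y^{j-2}+(j+1)jY^{j-1}) - i^2(jY^{j-1}+(j+1)Y^j)^2\bigr]$. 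Both bracketed terms are $Y^{2j-2}$ times a quadratic in $Y$; I would expand and check that the resulting quadratic $\alpha + \beta Y + \gamma Y^2$ has negative discriminant (again $\beta^2 - 4\alpha\gamma<0$), so it has no real roots at all, hence $\Hess(Q)=0$ meets neither axis's complement. The case $Q=X^iY^j(X+Y^2)=X^{i+1}Y^j + X^iY^{j+2}$ is handled identically: compute the three second partials, factor out $X^{2i-2}Y^{2j-2}$, and verify the remaining binary form in $X,Y$ (now of bidegree matching $X+Y^2$) is sign-definite via its discriminant. In all three cases the structure is the same — the Hessian is a monomial times a definite binary quadratic form — so once the first computation is organized, the other two are variations; the only real work is the three discriminant inequalities, each of which I expect to simplify to a manifestly positive expression once common factors like $i^2(i-1)$ or $j^2(j-1)$ are divided out, using $i,j\ge 2$ exactly to guarantee strictness.
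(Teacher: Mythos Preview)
Your approach is correct and is exactly the paper's: after dividing out the monomial factor, each Hessian is a quadratic (in $X,Y$ or in $Y$ alone) with negative discriminant, hence has no real zeros in $(\RR^*)^2$. The paper states this in a single sentence without carrying out the expansions; your explicit computations simply flesh out that sentence, and the discriminants do indeed come out strictly negative for $i,j\ge 2$ (e.g.\ for $X^iY^j(X+Y)$ the reduced form is $-(i+j)\bigl[(i+1)jX^2+2ijXY+i(j+1)Y^2\bigr]$ with discriminant $-4ij(i+j+1)$).
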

\begin{proof}
Up to division by powers of $x$ and $y$ these polynomial are of degree
2 with negative discriminant.
\end{proof}
By symmetry, the curve with equation $\Hess(X^iY^j(1+X))=0$ does not
have any real points in $(\RR^*)^2$.

\begin{lemma}\label{pieces}
For any $i\ge 2$ and $j\ge 2$, the real point set in $(\RR^*)^2$ of
the curves with equation $\Hess(X^iY^j(X+Y+Y^2))=0$,
$\Hess(X^iY^j(XY+X+Y^2))=0$, and $\Hess(X^iY^j(1+X+Y))=0$ consists of
one compact smooth oval.
\end{lemma}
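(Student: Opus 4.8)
The plan is to compute $\Hess(X^iY^jR)$ for each of the three choices $R\in\{\,X+Y+Y^2,\ XY+X+Y^2,\ 1+X+Y\,\}$ and to analyse the resulting curve directly. Put $Q=X^iY^jR$. Differentiating twice and using $i,j\ge 2$, one sees that $\partial^2Q/\partial X^2$, $\partial^2Q/\partial Y^2$ and $\partial^2Q/\partial X\partial Y$ are divisible by $X^{i-2}Y^j$, $X^iY^{j-2}$ and $X^{i-1}Y^{j-1}$ respectively, so that
\[
\Hess(Q)(X,Y)=X^{2i-2}Y^{2j-2}\,H_R(X,Y)
\]
for an explicit polynomial $H_R$. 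Since $X^{2i-2}Y^{2j-2}>0$ on $(\RR^*)^2$, it suffices to prove that $\{H_R=0\}\cap(\RR^*)^2$ is a single smooth compact oval. In all three cases $H_R$ has degree $2$ in the variable $X$; writing $H_R=A_R(Y)X^2+B_R(Y)X+C_R(Y)$, the curve is a branched double cover of a subset of the $Y$-line, and everything is governed by the $X$-discriminant $\delta_R(Y)=B_R(Y)^2-4A_R(Y)C_R(Y)$.

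I would treat $R=1+X+Y$ first, as here $H_R$ has total degree $2$. One checks that $H_R=ij\,L_1L_2-M^2$, where $L_1,L_2,M$ are the linear forms remaining after factoring the three second partials, and that the quadratic part of $H_R$ equals $-(i+j)\bigl((i+1)jX^2+2ijXY+i(j+1)Y^2\bigr)$, which is negative definite since $4i^2j^2-4ij(i+1)(j+1)=-4ij(i+j+1)<0$; hence $\{H_R=0\}$ is an ellipse, a point, or empty. Solving $\nabla H_R=0$ gives the centre $\bigl(-\tfrac{i}{\,i+j+1\,},-\tfrac{j}{\,i+j+1\,}\bigr)$, which lies in the open third quadrant, and evaluating there gives $H_R=\tfrac{ij}{i+j+1}>0$; so the conic is a genuine, hence smooth, ellipse. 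Restricting $H_R$ to $X=0$ yields a quadratic in $Y$ with discriminant $4i^2j(i+j)(1-i)<0$, and by the symmetry $X\leftrightarrow Y$, $i\leftrightarrow j$ the same holds on $Y=0$; so the ellipse misses both coordinate axes. Being connected with centre in the open third quadrant, it then lies entirely in that quadrant, hence is one smooth compact oval of $(\RR^*)^2$.

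For $R=X+Y+Y^2$ and $R=XY+X+Y^2$ the polynomial $H_R$ has bidegree $(2,4)$, but the scheme is the same. One checks that $A_R$ never vanishes (it is a negative constant for the first $R$, and $-(i+1)$ times a positive-definite quadratic in $Y$ for the second), so the curve stays bounded over any compact $Y$-interval; that $C_R(Y)$ equals $-Y^2$, resp.\ $-Y^4$, times a positive quantity, so $C_R(Y)=0$ only when $Y=0$; and that $B_R(0)=0$. These three facts show at once that the curve in $(\RR^*)^2$ meets neither axis ($X=0$ forces $C_R=0$, hence $Y=0$; while $Y=0$ forces $A_R X^2=0$, hence $X=0$) and that $\delta_R(Y)=(\text{positive constant})\cdot Y^2\,p_R(Y)$, resp.\ $Y^4\,p_R(Y)$, with $p_R$ a quadratic in $Y$. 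Over the interior of $\{p_R\ge 0\}$ the curve consists of two branches $X_\pm(Y)$, with $X_+X_-=C_R/A_R>0$, that merge at the endpoints; so as soon as $\{p_R\ge 0\}$ is a single compact interval not containing $0$, the curve is one topological circle lying in a single open quadrant. Smoothness then needs to be checked only at the two turning points where $\partial_XH_R=0$, and there $\partial_YH_R\ne 0$ by a finite computation.

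The main obstacle is the uniform verification, valid for all $i,j\ge 2$, that $\{p_R\ge 0\}$ is a nonempty compact interval avoiding $0$, equivalently that the downward parabola $p_R$ has two real roots of the same sign. Part of this is automatic: the leading coefficient of $p_R$ is a visibly negative polynomial in $i$ and $j$ (for $R=XY+X+Y^2$ it equals $-4(j+1)(i+j+1)^2(i+j+2)$), and $p_R(0)$ is negative as well (it is $-j(i+j)^2(i+j+1)$ for $R=X+Y+Y^2$, and $4(i+j+1)\bigl(i(j-1)^2(i+j+1)-j(i+1)(j+2)(i+j)\bigr)$ for $R=XY+X+Y^2$, the bracket being negative because $j(i+1)(j+2)(i+j)-i(j-1)^2(i+j+1)=i^2(4j-1)+i(4j^2+3j-1)+j^3+2j^2>0$). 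What remains is to show that the discriminant of $p_R$ is strictly positive, a concrete if somewhat lengthy polynomial inequality in $i$ and $j$ that I would verify by direct expansion. I would also remark that the three cases cannot be reduced to one another by a monomial change of coordinates: linear changes merely multiply $\Hess$ by a constant, but the Hessian operator is not equivariant under monomial substitutions, so the three computations must be done separately.
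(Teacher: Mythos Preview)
Your argument is correct in outline, but it takes a noticeably different and heavier route than the paper's. The paper does not verify compactness or axis--avoidance by hand. Instead it invokes Proposition~\ref{patch cp} together with Lemma~\ref{bound}: the edges of the Newton triangle of $X^iY^jR$ are precisely (up to a monomial factor) the polynomials of Lemma~\ref{bound}, so by Proposition~\ref{patch cp} the edge truncations of $\Hess(X^iY^jR)$ have no real points in $(\RR^*)^2$; hence the curve cannot reach the toric boundary and all its components in $(\RR^*)^2$ are automatically compact. This replaces your entire analysis of $A_R$, $C_R$, $B_R(0)$, sign of roots of $p_R$, and the separate conic treatment of $1+X+Y$. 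After that, the paper does exactly what you do: take the discriminant in $X$, strip off the monomial factor in $Y$, and observe that one is left with a quadratic in $Y$ whose discriminant is positive, giving two real values of $Y$ and hence a single oval. So the step you flag as ``the main obstacle'' --- positivity of the discriminant of $p_R$ uniformly in $i,j\ge 2$ --- is precisely the step the paper also asserts without writing out; you are not missing an idea there, only a computation, and the paper does no better. What your approach buys is self--containment (no appeal to the patchworking machinery) and, for $R=1+X+Y$, a cleaner picture (an explicit ellipse with located centre); what the paper's approach buys is that compactness, avoidance of the axes, and the ``single quadrant'' conclusion all come for free from the edge lemma, so only the oval count remains. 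Your final remark about monomial changes is correct but not needed in either argument.
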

\begin{proof}
According to Proposition \ref{patch cp} and Lemma \ref{bound}, these
three curves can only have compact connected component in $(\RR^*)^2$. 
Up to division by powers of $X$ and $Y$, the discriminant with respect
to the variable $X$ of these polynomials are degree 2 polynomials in
$Y$ with positive discriminant. They thus have exactly two distinct
real roots which attest the existence of exactly one oval for each
Hessian curve.
\end{proof}

To prove Theorem~\ref{theorem} we apply Viro's patchworking theorem to
a polynomial $Q_t(X,Y)$ whose truncation on the polygons of $\tau_\nu$
are the polynomials $X^iY^j(X+Y+Y^2)$, $X^iY^j(XY+X+Y^2)$ and
$X^kY^2(1+X+Y)$ for $2\le i,j\le d-2$ and $2\le k\le d-1$.
Proposition~\ref{patch cp} and Lemma~\ref{pieces} insure that, for
sufficiently small positive $t$, the Hessian curve of $Q_t(X,Y)$ has
at least $(d-2)^2$ smooth compact connected components.

\small
\def\rightmark{\em Bibliography}
\addcontentsline{toc}{section}{References}

\end{document}